\theoremstyle{plain}
\newtheorem{theorem}{Theorem}
\newtheorem{lemma}{Lemma}
\newtheorem{proposition}{Proposition}
\newtheorem{remark}{Remark}
\newcommand{\QQ}{{\mathbb Q}}
\newcommand{\Qbar}{\overline{\mathbb Q}}
\newcommand{\ZZ}{{\mathbb Z}}
\newcommand{\vs}{\vspace{.15cm}}
\newcommand{\VS}{\vspace{.3cm}}
\newcommand{\TE}{{E(\mathbb{Q})_{\rm tors}}}
\title{Galois Theory, discriminants and torsion subgroup of elliptic curves}
\author{Irene Garc\'{\i}a--Selfa}
\address{Departamento de Matem\'aticas, Universidad de Huelva. Facultad de Ciencias Experimentales. Campus de \lq\lq El Carmen", Avenida de las Fuerzas Armadas, s/n. 21071 Huelva (Spain).}
\email{irene.garcia@dmat.uhu.es} 
\author{Enrique Gonz\'alez--Jim\'enez}
\address{Departamento de Matem\'aticas, Universidad Aut\'onoma de Madrid. 28049 Madrid  (Spain).}
\email{enrique.gonzalez.jimenez@uam.es}
\author{Jos\'e M. Tornero}
\address{Departamento de \'Algebra, Universidad de Sevilla. P.O. 1160. 41080 Sevilla (Spain).}
\email{tornero@us.es}
\date{\today}
\subjclass[2000]{11G05 (primary); 11R16 (secondary).}
\keywords{Elliptic curves, Cubic equations, Torsion subgroup, Galois theory}
\begin{document}

\begin{abstract}
We find a tight relationship between the torsion subgroup and the image of the mod $2$  Galois representation associated to an elliptic curve defined over the rationals.  This is shown using some characterizations for the squareness of the discriminant of the elliptic curve.
\end{abstract}

\maketitle


\section{Introduction.}


In what follows we will denote by $C_n$ and $S_n$ the cyclic group of order $n$ and the symmetric group acting on $n$ elements, respectively. 

Let $E$ be an elliptic curve defined over $\QQ$. Let $p$ be a prime number and let $E[p]$ be the group of points of order $p$ on $E(\Qbar)$, where $\Qbar$ denotes an algebraic closure of $\QQ$. The action of the absolute Galois group ${\rm G}_\QQ={\rm Gal}(\Qbar/\QQ)$ on $E[p]$ defines a mod $p$ Galois representation
$$
\rho_{E,p}:{\rm G}_\QQ\rightarrow {\rm Aut}(E[p])\cong {\rm GL}_2(\mathbb{F}_p).
$$
Let $\QQ(E[p])$ be the number field generated by the coordinates of the points of $E[p]$. Therefore, the Galois extension $\QQ(E[p])/\QQ$ has Galois group
$$
{\rm Gal}(\QQ(E[p])/\QQ)\cong \rho_{E,p}({\rm G}_\QQ)
$$
For $p=2$ it is known that $\rho_{E,2}({\rm G}_\QQ)$ can be determined in terms of the discriminant $\Delta(E)$ and $E(\QQ)[2]$, the points of order $2$ defined over the rationals (cf. \cite{serre2,serre1,serre-tate}):
\begin{equation}\label{serre}
\rho_{E,2}({\rm G}_\QQ)\cong\left\{
\begin{array}{ccl}
S_3 & & \mbox{if $\sqrt{\Delta(E)}\not\in\QQ$ and $\#E(\QQ)[2]=1$},\\ 
C_3 & & \mbox{if $\sqrt{\Delta(E)}\in\QQ$ and $\#E(\QQ)[2]=1$},\\ 
C_2 & & \mbox{if $\sqrt{\Delta(E)}\not\in\QQ$ and $\#E(\QQ)[2]>1$},\\ 
\{{\rm id}\} & & \mbox{if $\sqrt{\Delta(E)}\in\QQ$ and $\#E(\QQ)[2]>1$}.\\ 
\end{array}
\right.
\end{equation}
Note that ${\rm GL}_2(\mathbb{F}_2)\cong S_3$, the non-split Cartan subgroup of ${\rm GL}_2(\mathbb{F}_2)$ is isomorphic to $C_3$ and the conjugated Borel subgroup of ${\rm GL}_2(\mathbb{F}_2)$ is isomorphic to $C_2$.

An elliptic curve $E$ defined over the rationals has always an integral short Weierstrass form. That is, $E$ has a model of the form 
$$
E: \ Y^2 = X^3+AX+B, \mbox{ with } A,B \in \ZZ.
$$ 
Then the discriminant of this model is $\Delta(E) = -2^4 (4A^3+27B^2)$. Any change of variables over the rationals preserving this short form is of the type $(x,y)=(u^2x',u^3y')$ with $u \in \QQ, \; u\ne 0$. Therefore, if $E'$ is the curve obtained after such a change, we have $u^{12} \Delta(E') = \Delta(E)$. Then the squareness of the discriminant of $E$ does not depend on the short model of $E$ but on $E$ itself.

Our aim is finding if there is a relationship between the torsion group of $E(\QQ)$ (noted $\TE$ in what follows), the discriminant of $E$ and $\rho_{E,2}({\rm G}_\QQ)$. 

Assume $E$ is an elliptic curve which has a non--trivial torsion subgroup. Taking into account Mazur's exhaustive classification \cite{MazurIHES, Mazur}, the possible structures of $\TE$ are $C_n$ for $n=2\dots10,12$ and $C_2\times C_{2n}$ for $n=1\dots 4$. The easiest cases are those in which the order of $\TE$ is even and they will be treated at section \ref{s2}. 

The four remaining cases, $\TE=C_n$, with $n=3,5,7,9$ will be treated separately at sections  \ref{s3},  \ref{s5},  \ref{s7},  \ref{s9}, respectively. In these cases, thanks to (\ref{serre}), the squareness of $\Delta(E)$ determines the image of $\rho_{E,2}$. We will prove that there are no elliptic curves over the rationals with square discriminant and points of order $5,7$ and $9$ respectively and we will give a parametrization of the elliptic curves with square discriminant and a point of order $3$.

Section \ref{odd} consists of the necessary background for elliptic curves with points of odd order. 

Before stating the main theorems at the last section, we will give a parametrization of all elliptic curves over the rationals having square discriminant at section \ref{s0}. Some remarks on the case of trivial torsion will be given there too. 

At section \ref{s_thm}, we will state the main theorems of this paper whose proofs will have been stablished by then. 

Finally, in an appendix, we give a complete parametrization of the integer solutions of the Diophantine equations $x^2+3y^2=4z^3$. These solutions will be needed at sections \ref{s3} and \ref{s0}.

\subsection*{Acknowledgements}
We would like to thank to F. Beukers, H. Cohen, H. Darmon, L. Dieulefait, J. Gonz\'alez and J. Fern\'andez for useful comments during the preparation of this paper. Special thanks are due to F. Beukers for suggesting the complete parametrization of the Diophantine equation $x^2+3y^2=4z^3$. 

First and third authors were supported partially by FQM--218 and P08--FQM--03894 (Junta de Andaluc\'{\i}a) and MTM 2007--66929 (Ministerio de Educaci\'on y Ciencia, Spain). Research of the second author was supported in part by grant MTM 2006--10548 (Ministerio de Educaci\'on y Ciencia, Spain) and CCG07--UAM/ESP--1814 (Universidad Aut\'onoma de Madrid -- Comunidad de Madrid, Spain).

\section{The even case.}\label{s2}

Let $E:Y^2=F(X)=X^3+AX+B$ be an elliptic curve over $\QQ$ such that $E(\QQ)[2]$ has an even positive number of points. Therefore, by (\ref{serre}),  the squareness of $\Delta(E)$ determines the image of the mod $2$ Galois representation attached to $E$.  By definition $\Delta(E)=2^4(\alpha_1-\alpha_2)^2(\alpha_1-\alpha_3)^2(\alpha_2-\alpha_3)^2$, where $\alpha_1,\alpha_2,\alpha_3$ are the roots of $F(X)$. Then if  $\TE$ is non--cyclic we have that $\alpha_1,\alpha_2,\alpha_3\in\QQ$ and $\Delta(E)$ is a square over $\QQ$. Meanwhile, if $\TE$ is cyclic then there is only a point of order $2$ on $E(\QQ)$ and therefore $F(X)=(X-a)(X^2+aX+b)$ where $a,b\in\QQ$ satisfy $A=b-a^2$, $B=-ab$ and $a^2-4b$ is a non-square over $\QQ$. Since $\Delta(E)=2^4\Delta(F)$ we have $\Delta(E)=2^4(a^2-4b)(2a^2+b)^2$ is not a square in $\QQ$. This proves the following:

$$
\rho_{E,2}({\rm G}_\QQ)\cong\left\{
\begin{array}{ccl}
C_2 & & \mbox{if $\#E(\QQ)[2]=2$},\\ 
\{{\rm id}\} & & \mbox{if $\#E(\QQ)[2]=4$}.\\ 
\end{array}
\right.
$$

\section{Families of elliptic curves with a torsion point of odd order.}\label{odd}

In this section we are going to introduce the necessary background related to elliptic curves defined over the rationals with a point of prescribed odd order. There are well-known rational parametrizations for the modular curve $X_1(N)$ with $N \in \{3, 5, 7, 9\}$ (see e.g. Kubert \cite{Kubert}). Therefore these parametrizations give us families of elliptic curves defined over $\QQ$ with a point of order $N \in \{3, 5, 7, 9\}$. 

An old characterization of elliptic curves containing a rational point of order 3 is given by the Hessian form. Nevertheless we are going to use a new one (cf. \cite{GT}) since this will fit better our purposes. 

Let us introduce the construction given in \cite{GT}; every elliptic curve with a rational point of order $3$ can be written in the following form:
$$
E_3(\alpha, \beta)\,:\, Y^2  = X^3+(27\alpha^4+6 \alpha \beta)X+ \beta^2-27 \alpha^6,\quad \alpha,\beta\in \ZZ.
$$

For the remaining cases that will be used below, an analogous expression can be achieved by means of the Tate normal form \cite{Kubert}:
$$
\mathcal{T}ate(b,c)\,:\,Y^2 + (1-c) XY  - b Y = X^3 - bX^2, \quad b,c\in \QQ^*.
$$

Denote by $E_n(\alpha)$ the one-parameter family of curves having a rational point of order $n$. Then
\begin{eqnarray*}
E_5(\alpha)&=&\mathcal{T}ate( \alpha, \alpha),\\
E_7(\alpha)&=&\mathcal{T}ate(\alpha^2(\alpha - 1),\alpha(\alpha - 1))),\\
E_9(\alpha)&=&\mathcal{T}ate(\alpha^2(\alpha-1)(\alpha(\alpha-1)+1),\alpha^2(\alpha-1)).
\end{eqnarray*}

Now we can take the above equations to a short Weierstrass form and find the parametric family containing all elliptic curves with points of order $5$, $7$ and $9$. The actual families are:
\begin{eqnarray*}
E_5 (\alpha): Y^2 &=&  \displaystyle X^3 -27\left(\alpha^4 - 12\alpha^3 + 14\alpha^2 + 12\alpha + 1\right) X \qquad\qquad\qquad\qquad\quad\\ 
&& \displaystyle + \,54\left(\alpha^2 + 1\right)\left(\alpha^4 - 18 \alpha^3 + 74 \alpha^2 + 18 \alpha + 1\right) 
\end{eqnarray*}
\begin{eqnarray*}
E_7 (\alpha): Y^2 &=&  \displaystyle X^3 -27\left(\alpha^8 - 12\alpha^7 + 42\alpha^6 - 56\alpha^5 + 35\alpha^4 - 14\alpha^2 + 4\alpha + 1\right) X \\
&& \displaystyle +\, 54\left(\alpha^{12} - 18\alpha^{11} + 117\alpha^{10} - 354\alpha^9 + 570\alpha^8 - 486\alpha^7 + \right.\\
&& \displaystyle \left. 273\alpha^6 - 222\alpha^5 + 174\alpha^4 - 46\alpha^3 - 15\alpha^2 + 6\alpha + 1\right)
\end{eqnarray*}
\begin{eqnarray*}
E_9 (\alpha): Y^2 &=& \displaystyle X^3 -27 \left(\alpha^3 - 3 \alpha^2 + 1\left)\right(\alpha^9 - 9 \alpha^8 + 27 \alpha^7 - 48 \alpha^6 + 54 \alpha^5 -\right.\\
&& \displaystyle \left. 45 \alpha^4 + 27 \alpha^3 - 9 \alpha^2 + 1\right)X\\
&& \displaystyle +\, 54\left(\alpha^{18} - 18\alpha^{17} + 135\alpha^{16} - 570\alpha^{15} + 1557\alpha^{14} - 2970\alpha^{13} +\right.\\
&& \displaystyle \left. 4128\alpha^{12} - 4230\alpha^{11} + 3240\alpha^{10} - 2032\alpha^9 + 1359\alpha^8 - 1080\alpha^7 +\right.\\
&& \displaystyle \left.  735\alpha^6 - 306\alpha^5 + 27\alpha^4 + 42\alpha^3 - 18\alpha^2 + 1\right)
\end{eqnarray*}

Therefore, an elliptic curve $E$ defined over $\QQ$ with a rational point of order $n=3$ (resp.  $n=5$, $7$ or $9$) is $\QQ$--isomorphic to  $E_3(\alpha,\beta)$ (resp. $E_n (\alpha)$ for $n=5,7$ or $9$) for some $\alpha,\beta\in\ZZ$ (resp. $\alpha\in\QQ$).

These kind of arguments have proved fruitful in the last years, as a number of results have appeared based on them \cite{BI,Ingram,GOT,GT2}. Now we can write the discriminant $\Delta_n$  for the above elliptic curves $E_n$, to obtain
\begin{eqnarray*}
\Delta_3 (\alpha,\beta)& = &  - 2^4 \cdot 3^3 \cdot (5\alpha^3 + \beta)(9\alpha^3+\beta)^3 \\
\Delta_5 (\alpha)& = & 2^{12} \cdot 3^{12} \cdot \alpha^5 (\alpha^2-11 \alpha -1)\\
\Delta_7 (\alpha)& = &2^{12} \cdot 3^{12} \cdot \alpha^7(\alpha-1)^7(\alpha^3-8\alpha^2+5\alpha+1)\\
\Delta_9 (\alpha) &=& 2^{12} \cdot 3^{12} \cdot \alpha^9 (\alpha -1)^9 (\alpha^2-\alpha+1)^3(\alpha^3-6\alpha^2+3\alpha+1) 
\end{eqnarray*}

In the following section we will study the rationality of the square root of the above discriminants to decide whether the corresponding Galois group is $C_3$ or $S_3$.

\section{The case $n=3$}\label{s3}

Elliptic curves with points of order three must yield a discriminant with the form $
\Delta_3(\alpha,\beta)$ for some $\alpha,\beta\in \ZZ$. So, in order to find an elliptic curve $E$ having square discriminant we are bound to find integral solutions to the equation
$$
\omega^2 = - 3 \cdot (5\alpha^3 + \beta)(9\alpha^3+\beta).
$$
Let us denote $g=\gcd(5\alpha^3 + \beta,9\alpha^3+\beta)$. This necessarily leads to
$$
5\alpha^3 + \beta=\pm gu^2,\pm 3gv^2 \mbox{ and } 9\alpha^3+\beta =  \mp 3 gv^2,\mp gu^2, \mbox{ respectively} 
$$
for some integers $u$ and $v$.  Solving the above Diophantine systems of equations is equivalent to finding the integer solutions to  
$$
x^2+3y^2=4 z^3,
$$
where $(x,y,z)=(ug^2,vg^2,\mp \alpha g)$. Thus, for the first two systems we obtain that the elliptic curves $E_3(\alpha,\beta)$ have square discriminant for:
\begin{equation}\label{fam1}
(\alpha,\beta)  =  \left(\mp \frac{z}{g} , \pm \frac{x^2+5z^3}{g^3}\right),
\end{equation}
and for the last two systems we obtain
\begin{equation}\label{fam2}
(\alpha,\beta)  =  \left(\mp \frac{z}{g} , \pm \frac{3y^2+5z^3}{g^3}\right).
\end{equation}

In all those cases $x,y,z\in \ZZ$ satisfies $x^2+3y^2=4z^3$.  At the appendix, we give parametrizations of all integer solutions of the Diophantine equation $x^2+3y^2=4 z^3$ at Lemma \ref{ecu4} in terms of parameters $(a,b,c,d)$. Then we can clear denominators and obtain $\QQ$-isomorphic elliptic curves  
$$
E^{(i)}(a,b,c,d):Y^2=P^{(i)}(a,b,c,d)(X)
$$
attached to the parametrization $(i)$, for $i=1,2$, where:
{\small
$$
\begin{array}{l}
P^{(1)}(a,b,c,d)(X)= X^3-9 (c^2 + c d + d^2)^3 (a^2 + a b + b^2)
(3 a^6 c^2 + 3 a^6 c d + a^6 d^2 + 9 a^5 b c^2  - 3 a^5 b c d \\
\quad - 3 a^5 b d^2 - 30 a^4 b^2 c d - 15 a^3 b^3 c^2 - 15 a^3 b^3 c d + 25 a^3 b^3 d^2 +30 a^2 b^4 c d  + 30 a^2 b^4 d^2 + 9 a b^5 c^2 \\ \quad+ 21 a b^5 c d+ 9 a b^5 d^2 + 3 b^6 c^2 + 3 b^6 c d  + b^6 d^2)X \\
\quad + 9 (c^2 + c d + d^2)^4(6 a^{12} c^4 + 12 a^{12} c^3 d + 12 a^{12} c^2 d^2 + 6 a^{12} c d^3 + a^{12} d^4 + 36 a^{11} b c^4 \\
\quad + 36 a^{11} b c^3 d + 18 a^{11} b c^2 d^2 - 6 a^{11} b c d^3 - 6 a^{11} b d^4 +  72 a^{10} b^2 c^4 - 54 a^{10} b^2 c^3 d -72 a^{10} b^2 c^2d^2 \\
\quad - 78 a^{10} b^2 c d^3 - 18 a^{10} b^2 d^4 + 30 a^9 b^3 c^4 -  318 a^9 b^3 c^3 d - 102 a^9 b^3 c^2 d^2 - 132 a^9 b^3 c d^3 - 4 a^9 b^3 d^4 \\
\quad - 81 a^8 b^4 c^4 - 378 a^8 b^4 c^3 d 
  + 162 a^8 b^4 c^2 d^2 - 252 a^8 b^4 c d^3 + 45 a^8 b^4 d^4 - 108 a^7 b^5 c^4 - 108 a^7 b^5 c^3 d\\
  \quad  +  108 a^7 b^5 c^2 d^2
  - 576 a^7 b^5 c d^3 + 216 a^7 b^5 d^4 - 72 a^6 b^6 c^4 - 144 a^6 b^6 c^3 d - 468 a^6 b^6 c^2 d^2 \\
  \quad - 396 a^6 b^6 c d^3 
 + 600 a^6 b^6 d^4 - 108 a^5 b^7 c^4 - 324 a^5 b^7 c^3 d - 216 a^5 b^7 c^2 d^2 + 684 a^5 b^7 c d^3 \\
 \quad + 900 a^5 b^7 d^4
 - 81 a^4 b^8 c^4 + 54 a^4 b^8 c^3 d + 810 a^4 b^8 c^2 d^2 +  1386 a^4 b^8 c d^3 + 756 a^4 b^8 d^4 + 30 a^3 b^9 c^4 \\
 \quad + 438 a^3 b^9 c^3 d + 1032 a^3 b^9 c^2 d^2 + 1002 a^3 b^9 c d^3 + 374 a^3 b^9 d^4 + 72 a^2 b^{10} c^4 +  
  342 a^2 b^{10} c^3 d \\
  \quad + 522 a^2 b^{10} c^2 d^2 + 384 a^2 b^{10} c d^3 + 114 a^2 b^{10} d^4 + 36 a b^{11} c^4 + 
 108 a b^{11} c^3 d + 126 a b^{11} c^2 d^2 \\
 \quad + 78 a b^{11} c d^3 +  18 a b^{11} d^4 + 6 b^{12} c^4 + 12 b^{12} c^3 d 
 + 12 b^{12} c^2 d^2 + 6 b^{12} c d^3 + b^{12} d^4)
 \\[.2cm]
P^{(2)}(a,b,c,d)(X)=X^3-3(c^2 + c d + d^2)^3(a^2 + a b + b^2)(a^6 c^2 + a^6 c d + 7 a^6 d^2 + 3 a^5 b c^2 + 39 a^5 b c d \\
 \quad+ 39 a^5 b d^2 + 60 a^4 b^2 c^2 + 150 a^4 b^2 c d + 60 a^4 b^2 d^2 + 115 a^3 b^3 c^2 + 115 a^3 b^3 c d - 5 a^3 b^3 d^2 + 60 a^2 b^4 c^2 \\
 \quad- 30 a^2 b^4 c d - 30 a^2 b^4 d^2 + 3 a b^5 c^2 - 33 a b^5 c d + 3 a b^5 d^2 + b^6 c^2 + b^6 c d + 7 b^6 d^2)X\\
\quad  -(c^2 + c d + d^2)^4
(2 a^{12} c^4 + 4 a^{12} c^3 d - 24 a^{12} c^2 d^2 - 26 a^{12} c d^3 - 37 a^{12} d^4 + 12 a^{11} b c^4 - 156 a^{11} b c^3 d \\
 \quad- 414 a^{11} b c^2 d^2 - 534 a^{11} b c d^3 - 366 a^{11} b d^4 - 
228 a^{10} b^2 c^4 - 1446 a^{10} b^2 c^3 d - 2880 a^{10} b^2 c^2 d^2 \\
 \quad- 3246 a^{10} b^2 c d^3 - 1434 a^{10} b^2 d^4 - 1250 a^9 b^3 c^4 - 5902 a^9 b^3 c^3 d - 10758 a^9 b^3 c^2 d^2 - 
9508 a^9 b^3 c d^3\\
 \quad - 2876 a^9 b^3 d^4 - 4059 a^8 b^4 c^4 - 16002 a^8 b^4 c^3 d - 23382 a^8 b^4 c^2 d^2 - 14868 a^8 b^4 c d^3 - 2925 a^8 b^4 d^4 \\
 \quad- 8604 a^7 b^5 c^4 - 
25740 a^7 b^5 c^3 d - 26676 a^7 b^5 c^2 d^2 - 10944 a^7 b^5 c d^3 - 936 a^7 b^5 d^4 - 11112 a^6 b^6 c^4\\
 \quad - 22224 a^6 b^6 c^3 d - 13428 a^6 b^6 c^2 d^2 - 2316 a^6 b^6 c d^3 + 
480 a^6 b^6 d^4 - 8604 a^5 b^7 c^4 - 8676 a^5 b^7 c^3 d \\
 \quad- 1080 a^5 b^7 c^2 d^2 + 396 a^5 b^7 c d^3 + 468 a^5 b^7 d^4 - 4059 a^4 b^8 c^4 - 234 a^4 b^8 c^3 d + 
270 a^4 b^8 c^2 d^2 \\
 \quad- 126 a^4 b^8 c d^3 + 504 a^4 b^8 d^4 - 1250 a^3 b^9 c^4 + 902 a^3 b^9 c^3 d - 552 a^3 b^9 c^2 d^2 + 698 a^3 b^9 c d^3 + 526 a^3 b^9 d^4 \\
 \quad- 228 a^2 b^{10} c^4 + 534 a^2 b^{10} c^3 d + 90 a^2 b^{10} c^2 d^2 + 912 a^2 b^{10} c d^3 + 150 a^2 b^{10} d^4 + 12 a b^{11} c^4 + 204 a b^{11} c^3 d \\
 \quad+ 126 a b^{11} c^2 d^2 + 
222 a b^{11} c d^3 - 78 a b^{11} d^4 + 2 b^{12} c^4 + 4 b^{12} c^3 d - 24 b^{12} c^2 d^2 - 26 b^{12} c d^3 - 37 b^{12} d^4)
\end{array}
$$
}
with the following discriminants
\begin{eqnarray*}
\Delta \left( E^{(1)} \right) & = & 2^4 3^6 (c^2 + c d + d^2)^8 (a^3 d + 3 a^2 b c + 3 a^2 b d + 3 a b^2 c - b^3 d)^6\\
& & (2 a^3 c + a^3 d + 3 a^2 b c - 3 a^2 b d - 3 a b^2 c - 6 a b^2 d - 2 b^3 c - b^3 d)^2\\[.2cm]
\Delta \left( E^{(2)} \right) & = & 2^4 3^4 (c^2 + c d + d^2)^8 (a^3 d + 3 a^2 b c + 3 a^2 b d + 3 a b^2 c - b^3 d)^2\\
& & (2 a^3 c + a^3 d + 3 a^2 b c - 3 a^2 b d - 3 a b^2 c - 6 a b^2 d - 2 b^3 c - b^3 d)^6
\end{eqnarray*}

Therefore we have proved the following result:

\begin{proposition}\label{3_GT}
Let $E$ be an elliptic curve defined over $\QQ$ with a rational point of order $3$ such that $\sqrt{\Delta(E)}\in\QQ$ . Then there exist $a,b,c,d\in\ZZ$ such that $E$ is $\QQ$-isomorphic to either $E^{(1)}(a,b,c,d)$ or $E^{(2)}(a,b,c,d)$. 
\end{proposition}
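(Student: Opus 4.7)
The plan is to make rigorous the reduction already sketched in Section \ref{s3}. By the discussion of Section \ref{odd}, any elliptic curve $E/\QQ$ with a rational point of order $3$ is $\QQ$-isomorphic to $E_3(\alpha,\beta)$ for some $\alpha,\beta\in\ZZ$, with discriminant $\Delta_3(\alpha,\beta)=-2^4\cdot 3^3\cdot(5\alpha^3+\beta)(9\alpha^3+\beta)^3$. After pulling out the obvious square factor $(9\alpha^3+\beta)^2$ and $2^4\cdot 3^2$, the hypothesis $\sqrt{\Delta(E)}\in\QQ$ is equivalent to $-3(5\alpha^3+\beta)(9\alpha^3+\beta)$ being a square in $\ZZ$.

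Setting $g=\gcd(5\alpha^3+\beta,\,9\alpha^3+\beta)$ and inspecting where the prime $3$ and the overall minus sign must sit, one reads off exactly the four possibilities listed in the text:
$$
(5\alpha^3+\beta,\;9\alpha^3+\beta)\in\{(\pm gu^2,\mp 3gv^2),\ (\pm 3gv^2,\mp gu^2)\}
$$
for some integers $u,v$. In each case, subtracting the two expressions eliminates $\beta$ and yields $4\alpha^3=\mp g(u^2+3v^2)$ (up to the relevant signs), which after rescaling by $g$ is precisely the Diophantine equation $x^2+3y^2=4z^3$ with $(x,y,z)=(ug^2,vg^2,\mp\alpha g)$. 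Adding the two expressions then recovers $\beta$ in terms of $\alpha,g,u,v$, producing formulas (\ref{fam1}) and (\ref{fam2}).

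At this point I would invoke Lemma \ref{ecu4} from the appendix, which gives a complete parametrization of the integer solutions of $x^2+3y^2=4z^3$ by four integer parameters $a,b,c,d$. Substituting those parametric $(x,y,z)$ into (\ref{fam1}) and (\ref{fam2}) gives rational values of $(\alpha,\beta)$; applying the admissible transformation $(X,Y)\mapsto(u^2 X,u^3 Y)$ to clear denominators while preserving the short Weierstrass form yields the explicit integral models $E^{(1)}(a,b,c,d)$ and $E^{(2)}(a,b,c,d)$ displayed above. Since $E^{(1)}$ and $E^{(2)}$ are $\QQ$-isomorphic to $E_3(\alpha,\beta)$ for the rational parameters produced in (\ref{fam1}) and (\ref{fam2}) respectively, they are $\QQ$-isomorphic to $E$.

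The main obstacle is purely bookkeeping. First one has to verify that the four sign combinations collapse to only two $\QQ$-isomorphism classes of families (the $\pm$ signs amount to swapping $(u,v)\leftrightarrow(-u,-v)$ or $(x,y)\leftrightarrow(-x,-y)$ in the parametrization). Second, one must check that clearing denominators from the rational $(\alpha,\beta)$ of (\ref{fam1}) and (\ref{fam2}) really reproduces the unwieldy cubics $P^{(1)}$ and $P^{(2)}$; this is a routine computer-algebra verification. The displayed factorizations of $\Delta(E^{(1)})$ and $\Delta(E^{(2)})$ — manifestly squares in $\ZZ$ up to the constants $2^4\cdot 3^6$ and $2^4\cdot 3^4$ — serve as a convenient consistency check on the final formulas.
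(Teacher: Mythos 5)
Your proposal follows the paper's own argument essentially step for step: reduce to $\omega^2=-3(5\alpha^3+\beta)(9\alpha^3+\beta)$ via the family $E_3(\alpha,\beta)$, split into the four sign/factor cases using $g=\gcd(5\alpha^3+\beta,9\alpha^3+\beta)$, translate to $x^2+3y^2=4z^3$ with $(x,y,z)=(ug^2,vg^2,\mp\alpha g)$, and then substitute the parametrization of Lemma \ref{ecu4} into (\ref{fam1}) and (\ref{fam2}) and clear denominators to obtain $E^{(1)}$ and $E^{(2)}$. This is correct and is the same proof as in the paper, including the observation that the sign choices pair up into just the two families.
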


\section{The case $n=5$}\label{s5}

Let us have a look at the case $n=5$. If we throw away quadratic factors in $\Delta_5 (\alpha)$ we will find out that curves $E$ with points of order $5$, for which $\sqrt{\Delta(E)} \in \QQ$ are parametrized by the affine rational points of the elliptic curve
$$
\mathcal{D}_5\,:\, z^2 = \alpha (\alpha^2-11 \alpha -1),
$$
where the discriminant of the right--hand side polynomial is, remarkably, $5^3$.

This is a well--known elliptic curve, in fact is $\QQ$-isogenous to the modular curve $X_0(20)$. The elliptic curve $\mathcal{D}_5 $ is denoted by \verb+20A4+ in Cremona's tables \cite{cremona} or \verb+20C+ in Antwerp tables \cite{antwerp}. Looking on that tables, or using a computer algebra package like \verb+SAGE+ or \verb+MAGMA+ (\cite{sage}, \cite{magma} resp.), we check that  $\mathcal{D}_5 (\QQ)=\{(0,0)\}\cup\{[0:1:0]\}\,$. Therefore the only affine rational point is $(0,0)$, which implies $\alpha = 0$. This precise value does not yield an elliptic curve, but a singular cubic on the family $E_5(\alpha)$. This proves the following result:

\vs

\begin{proposition}\label{5}
Let $E$ be an elliptic curve with $C_5\subset\TE$. Then $\sqrt{\Delta(E)} \notin \QQ$.
\end{proposition}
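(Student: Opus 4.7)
The plan is to reduce the squareness of $\Delta(E)$ to the existence of a rational point on an auxiliary elliptic curve whose Mordell--Weil group can be computed explicitly, and then observe that the only candidate value of the parameter leads to a singular cubic rather than an elliptic curve.

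First, by the results collected in Section \ref{odd}, any elliptic curve $E/\QQ$ with $C_5 \subset \TE$ is $\QQ$-isomorphic to $E_5(\alpha)$ for some $\alpha \in \QQ^*$ (we exclude $\alpha = 0$ since $\Delta_5(0) = 0$ would produce a singular cubic). As discussed in the introduction, the discriminants of two $\QQ$-isomorphic short Weierstrass models differ by a twelfth power, so rational squareness of the discriminant depends only on the $\QQ$-isomorphism class. Thus it suffices to determine when $\Delta_5(\alpha) = 2^{12} \cdot 3^{12} \cdot \alpha^5(\alpha^2 - 11\alpha - 1)$ is a rational square; since $2^{12} \cdot 3^{12} \cdot \alpha^4$ is already a square, this is equivalent to $\alpha(\alpha^2 - 11\alpha - 1) \in (\QQ^*)^2$.

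This in turn amounts to finding the affine rational points $(\alpha, z)$ on the auxiliary curve $\mathcal{D}_5 : z^2 = \alpha(\alpha^2 - 11\alpha - 1)$. Since the cubic on the right has nonzero discriminant $5^3$, $\mathcal{D}_5$ is itself an elliptic curve, so the key input is the explicit computation of $\mathcal{D}_5(\QQ)$. The plan here is to identify $\mathcal{D}_5$ with the curve \verb+20A4+ in Cremona's tables (equivalently \verb+20C+ in the Antwerp tables), which has rank $0$ and torsion $\ZZ/2\ZZ$, so $\mathcal{D}_5(\QQ) = \{O, (0,0)\}$. This Mordell--Weil determination is the main technical obstacle; one can confirm it either by consulting the tables directly or, independently, by carrying out a $2$-descent (the computation is short and easily reproduced in \verb+SAGE+ or \verb+MAGMA+).

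Finally, the unique affine rational point $(0,0)$ gives $\alpha = 0$, which has already been ruled out since it does not correspond to a smooth curve in the family $E_5(\alpha)$. Consequently there is no $\alpha \in \QQ^*$ rendering $\Delta_5(\alpha)$ a rational square, which yields the claim.
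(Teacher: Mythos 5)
Your proposal is correct and follows essentially the same route as the paper: reduce squareness of $\Delta_5(\alpha)$ to finding affine rational points on $\mathcal{D}_5 : z^2 = \alpha(\alpha^2-11\alpha-1)$, identify this curve as \verb+20A4+ with Mordell--Weil group of order $2$, and note that the only affine point $(0,0)$ corresponds to $\alpha=0$, a singular member of the family. The only (welcome) addition is that you make explicit the justification that squareness of the discriminant is a $\QQ$-isomorphism invariant, which the paper establishes in the introduction.
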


\section{The case $n=7$}\label{s7}

Move now to $n=7$, where the analogous argument to the case $n=5$ shows that curves with points of order seven for which $\sqrt{\Delta(E)} \in \QQ$ are parametrized by the affine rational points of the hyperelliptic curve
$$
\mathcal{D}_7\,:\, z^2 = \alpha (\alpha-1) (\alpha^3 - 8\alpha^2 + 5\alpha +1),
$$
where, by the way, we have that the discriminant  for the right--hand side polynomial is $7^4$. We have now a hyperelliptic curve of genus $2$, a much harder nut to crack; but we are lucky. Using \verb+MAGMA+ we obtain that the rank of the Jacobian of this genus $2$ curve is $0$, which makes it perfect for Chabauty's algorithm \cite{Chabauty}. This method computes the full list of points in the jacobian, then all rational points in the curve, which turn out to be $\mathcal{D}_7(\QQ)= \{(0,0),(1,0)\}\cup\{[0:1:0]\}$ . Again the affine rational points annhilate the discriminant of $E_7(\alpha)$ and hence we have proven the following result.

\vs

\begin{proposition}\label{7}
Let $E$ be an elliptic curve with $\TE \cong C_7$. Then $\sqrt{\Delta(E)} \notin \QQ$.
\end{proposition}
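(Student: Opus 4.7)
The plan is to run the same strategy as in the case $n=5$, but adapted to a higher-genus curve. Starting from
$$\Delta_7(\alpha) = 2^{12} \cdot 3^{12} \cdot \alpha^7(\alpha-1)^7(\alpha^3 - 8\alpha^2 + 5\alpha + 1),$$
I would first strip off the factors that are visibly squares in $\QQ$. Since $\alpha^7(\alpha-1)^7$ differs from $\alpha(\alpha-1)$ by $\alpha^6(\alpha-1)^6$, which is a rational square, $\sqrt{\Delta_7(\alpha)}\in\QQ$ if and only if the quintic
$$f(\alpha)=\alpha(\alpha-1)(\alpha^3-8\alpha^2+5\alpha+1)$$
is a rational square. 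Thus elliptic curves in the family $E_7(\alpha)$ with $\sqrt{\Delta(E)}\in\QQ$ correspond exactly to affine rational points on the hyperelliptic curve
$$\mathcal{D}_7:z^2=\alpha(\alpha-1)(\alpha^3-8\alpha^2+5\alpha+1).$$

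Since $f$ is squarefree of degree $5$ (its discriminant being $7^4\neq 0$), the curve $\mathcal{D}_7$ is smooth of genus $2$, which is the main obstacle: we no longer have an elliptic curve where Mordell--Weil tables can be directly consulted, and Faltings' theorem is not by itself effective. The standard way around this is to compute the Mordell--Weil rank of the Jacobian $J=\mathrm{Jac}(\mathcal{D}_7)$ via a $2$-descent. I would use \verb+MAGMA+ to verify that $\mathrm{rank}\,J(\QQ)=0$. Once the rank is zero, no Chabauty-type $p$-adic analysis is really needed: the entire group $J(\QQ)$ consists of torsion, and can be enumerated by reduction modulo small primes of good reduction, giving an explicit upper bound on $\#J(\QQ)$ that is matched by the obvious torsion already visible on $\mathcal{D}_7$.

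Lifting $J(\QQ)$ back to $\mathcal{D}_7(\QQ)$ under the Abel--Jacobi map based at $\infty$, one obtains
$$\mathcal{D}_7(\QQ)=\{(0,0),(1,0)\}\cup\{[0:1:0]\}.$$
The only affine rational points are $(\alpha,z)=(0,0)$ and $(\alpha,z)=(1,0)$, for which $z=0$ means $\Delta_7(\alpha)=0$. Consequently the cubic $E_7(\alpha)$ becomes singular at these values of $\alpha$, so they do not correspond to any elliptic curve. Therefore no elliptic curve $E$ with $\TE\cong C_7$ can satisfy $\sqrt{\Delta(E)}\in\QQ$, proving the proposition.
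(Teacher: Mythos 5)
Your proof is correct and follows essentially the same route as the paper: reduce the squareness of $\Delta_7(\alpha)$ to finding rational points on the genus-$2$ curve $\mathcal{D}_7$, verify with \texttt{MAGMA} that its Jacobian has Mordell--Weil rank $0$, determine $\mathcal{D}_7(\QQ)=\{(0,0),(1,0)\}\cup\{[0:1:0]\}$, and observe that the affine points kill the discriminant. The only (immaterial) difference is that the paper invokes Chabauty's method after the rank-$0$ computation, whereas you note that direct torsion enumeration already suffices in the rank-$0$ case.
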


\section{The case $n=9$}\label{s9}

Finally,  the case $n=9$.  This  can also be dealt with in a similar way, but a little extra work is needed. Following the steps as the above sections, the hyperelliptic curve parametrizing curves with $\TE \cong  C_9$ and square discriminant is
$$
\mathcal{D}_9\,:\, z^2 = \alpha(\alpha-1)(\alpha^2-\alpha+1)(\alpha^3-6\alpha^2+3\alpha+1)\,.
$$

\noindent {\bf Lemma.} $\mathcal{D}_9(\QQ)=\{(0,0),(1,0)\}\cup\{[0:1:0]\}\,$. 

\VS

\begin{proof} Let $u\in \mbox{Aut}_{\QQ}(\mathcal{D}_9)$ defined by 
$$
u(X,Y)= \left( \frac{1}{1-X},\frac{Y}{(1-X)^{4}} \right). 
$$
We have that $u$ has order $3$ and Riemann-Hurwitz formulae tell us that the quotient curve $C/\langle u\rangle$ has genus $1$. In fact this curve is an elliptic curve defined over $\QQ$, since $(0,0)\in C(\QQ)$ and $u$ is defined over $\QQ$. We will denote this elliptic curve by $\mathcal{E}_9$. A Weierstrass equation for $\mathcal{E}_9$ is given by $v^2=u^3-27$ and the quotient morphism is given by:
\begin{eqnarray*}
\pi : \mathcal{D}_9 &\longrightarrow & \mathcal{E}_9 \\
       (\alpha,z) & \mapsto & \displaystyle (u,v)=\left(\frac{\alpha^3-3\alpha^2+1}{\alpha(\alpha-1)},\frac{z(\alpha^2-\alpha+1)}{\alpha^2(\alpha-1)^2}\right)
\end{eqnarray*}

Using \verb+SAGE+ or \verb+MAGMA+ we compute that $\mathcal{E}_9$ is $\QQ$-isogenous to the modular curve $X_0(36)$, and it is the elliptic curve denotes by \verb+36A3+ in Cremona's tables or \verb+36C+ in Antwerp tables. The Mordell-Weil group of this elliptic curve is:
$$
\mathcal{E}_9(\QQ)=\{(3,0), [0:1:0] \}\cong \ZZ/2\ZZ.
$$
Now, to compute the set $\mathcal{D}_9(\QQ)$ we just need to compute the preimages of the points of $\mathcal{E}_9(\QQ)$ by the quotien morphism that are defined over $\QQ$ and then we obtain the desired result.
\end{proof}

\vs
Then we have proved the following result:
\vs

\begin{proposition}\label{9}

Let $E$ be an elliptic curve with $\TE \cong  C_9$. Then $\sqrt{\Delta(E)} \notin \QQ$.
\end{proposition}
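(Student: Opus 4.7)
The plan is to combine the Lemma with the setup from Section~\ref{odd}; with the Lemma in hand, the proof of the Proposition is a short bookkeeping argument, and essentially all of the work lives in the Lemma itself. Concretely, I would proceed in three steps.

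First, by Section~\ref{odd}, any elliptic curve $E/\QQ$ with $\TE \cong C_9$ is $\QQ$-isomorphic to $E_9(\alpha)$ for some $\alpha \in \QQ$. As noted in the introduction, such an isomorphism rescales the discriminant by a twelfth power, so $\sqrt{\Delta(E)} \in \QQ$ if and only if $\Delta_9(\alpha) \in (\QQ^*)^2$.

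Second, I would extract the square-free part of
$$\Delta_9(\alpha) = 2^{12}\cdot 3^{12}\cdot\alpha^9(\alpha-1)^9(\alpha^2-\alpha+1)^3(\alpha^3-6\alpha^2+3\alpha+1).$$
The constant $2^{12}3^{12}$ is already a square, and the exponents $9,9,3,1$ reduce modulo $2$ to $1,1,1,1$, so $\Delta_9(\alpha) \in (\QQ^*)^2$ precisely when
$$\alpha(\alpha-1)(\alpha^2-\alpha+1)(\alpha^3-6\alpha^2+3\alpha+1) \in (\QQ^*)^2,$$
that is, when there is a \emph{nonzero} $z\in\QQ$ with $(\alpha,z)\in\mathcal{D}_9(\QQ)$.

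Third, I would invoke the Lemma, which says $\mathcal{D}_9(\QQ)=\{(0,0),(1,0),[0{:}1{:}0]\}$. Each affine point has $z=0$; both correspond to values $\alpha\in\{0,1\}$ that are roots of $\Delta_9(\alpha)$, so $E_9(0)$ and $E_9(1)$ are singular cubics rather than honest elliptic curves. Hence no elliptic curve $E/\QQ$ with $\TE\cong C_9$ can have $\Delta(E)\in(\QQ^*)^2$, which is exactly the Proposition. The main obstacle—already dispatched—was the Lemma, which required the descent through the order-$3$ automorphism $u$ onto the rank-zero elliptic curve $\mathcal{E}_9: v^2=u^3-27$; after that, the deduction of the Proposition is immediate.
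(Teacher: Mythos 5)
Your proposal is correct and follows essentially the same route as the paper: reduce modulo squares to the hyperelliptic curve $\mathcal{D}_9$, invoke the Lemma that $\mathcal{D}_9(\QQ)=\{(0,0),(1,0)\}\cup\{[0:1:0]\}$ (proved via the order-$3$ automorphism and the rank-zero quotient $v^2=u^3-27$), and note that the affine points kill the discriminant. The only addition you make is to spell out explicitly the square-free reduction of the exponents $9,9,3,1$, which the paper leaves implicit in the phrase ``following the steps as the above sections.''
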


\section{The generic elliptic curve with square discriminant.}\label{s0}

Let $E\,:\, Y^2=X^3+A X+B$ be an elliptic curve with $A,B\in \ZZ$. Let us study when the discriminant $\Delta(E)$ is a square. This is equivalent to looking for integer solutions to the Diophantine equation 
$$
4 A^3+27 B^2=-C^2. 
$$

Making the change of variables $(x,y,z)=(C,3B,-A)$ we obtain that the integer solutions of the generalized Fermat equation  $x^2+3 y^2=4 z^3$ give us all the elliptic curves defined over the rationals with square discriminant. Lemma \ref{ecu4} from the appendix gives us a complete parametrization of the above Diophantine equation, which yields to the following elliptic curve:
\begin{eqnarray*}
E_{alt}(a,b,c,d)\,:\,Y^2 & =& X^3-3^4(c^2 + c d + d^2)(a^2 + a b + b^2)X\\
& & \qquad+ 3^{5}(c^2 + c d + d^2)(a^3 d + 3 a^2 b c + 3 a^2 b d + 3 a b^2 c - b^3 d)
\end{eqnarray*}
with discriminant
$$
\Delta(E_{alt}) = 2^4 3^{12} (c^2 + c d + d^2)^2(2 a^3 c + a^3 d + 3 a^2 b c - 3 a^2 b d - 3 a b^2 c - 6 a b^2 d - 2 b^3 c - b^3 d)^2
$$
Propositions \ref{5}, \ref{7} and \ref{9}, together with Section 2, tell us that the above elliptic curve has torsion subgroup either trivial, $C_3$ or non-cyclic. Then we have proved the following result:

\begin{proposition}
Let $E$ be an elliptic curve defined over $\QQ$ such that $\sqrt{\Delta(E)}\in\QQ$. Then there exist $a,b,c,d\in\ZZ$ such that $E$ is $\QQ$-isomorphic to $E_{alt}(a,b,c,d)$. Moreover, $\TE$ is either trivial, non-cyclic or $C_3$.
\end{proposition}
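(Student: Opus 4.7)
The plan is to separate the statement into two logically independent claims: the parametrization, and the restriction on $\TE$. Both reduce to material already developed in the excerpt.

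For the parametrization part, I would start from an integral short Weierstrass model $E\,:\,Y^2=X^3+AX+B$ with $A,B\in\ZZ$, which exists for any rational elliptic curve. Since $\Delta(E)=-2^4(4A^3+27B^2)$ and the squareness of the discriminant is intrinsic (as observed in the introduction), $\sqrt{\Delta(E)}\in\QQ$ forces $-(4A^3+27B^2)$ to be a rational square, and being a rational integer it must be an integer square $C^2$. The relation $4A^3+27B^2+C^2=0$ is rewritten via $(x,y,z)=(C,3B,-A)$ as $x^2+3y^2=4z^3$. At this point I would invoke Lemma \ref{ecu4} from the appendix, which yields a complete integral parametrization in terms of $a,b,c,d\in\ZZ$. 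Substituting this parametrization back into $A=-z$ and $B=y/3$ and collecting terms recovers exactly the coefficients of $E_{alt}(a,b,c,d)$, and a direct expansion verifies the stated discriminant. Hence every $E$ with $\sqrt{\Delta(E)}\in\QQ$ is $\QQ$-isomorphic to some $E_{alt}(a,b,c,d)$.

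For the torsion part, I would appeal to Mazur's classification, which confines $\TE$ to $C_n$ for $n\in\{1,\dots,10,12\}$ and $C_2\times C_{2n}$ for $n\in\{1,\dots,4\}$. Split according to parity of $\#\TE$. If $\#\TE$ is even, Section \ref{s2} shows that $\sqrt{\Delta(E)}\in\QQ$ occurs precisely when $\#E(\QQ)[2]=4$, i.e.\ when $\TE$ is non-cyclic; the cyclic even-order groups are excluded. If $\#\TE$ is odd, then $\TE\in\{C_1,C_3,C_5,C_7,C_9\}$, and Propositions \ref{5}, \ref{7} and \ref{9} successively eliminate $C_5$, $C_7$ and $C_9$ under the assumption $\sqrt{\Delta(E)}\in\QQ$. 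The only surviving odd-order possibilities are the trivial group and $C_3$, which together with the non-cyclic even case gives exactly the list in the statement.

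The main work is actually off-stage, packed into Lemma \ref{ecu4} of the appendix: producing a clean integral parametrization of $x^2+3y^2=4z^3$ is the genuine hurdle and drives the shape of $E_{alt}(a,b,c,d)$. Granted that lemma, the rest is bookkeeping: a substitution and a discriminant computation for the parametrization, and a case check against Mazur's list plus the propositions of the previous sections for the torsion assertion.
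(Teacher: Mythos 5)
Your proposal is correct and follows essentially the same route as the paper: reduce squareness of the discriminant to the equation $x^2+3y^2=4z^3$ via $(x,y,z)=(C,3B,-A)$, invoke Lemma \ref{ecu4} to obtain $E_{alt}(a,b,c,d)$ (up to the $\QQ$-isomorphism scaling that clears the denominator in $B=y/3$), and settle the torsion claim by combining Mazur's list with Section \ref{s2} and Propositions \ref{5}, \ref{7} and \ref{9}. No substantive differences from the paper's argument.
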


\begin{remark}{\rm
Let $P(X)$ be an irreducible polynomial with integer coefficients and degree $3$ such that the cubic number field attached to $P(X)$ is cyclic. Then the elliptic curve $E:Y^2=P(X)$ satisfies $\TE$ is either trivial or $C_3$. For example, let be  $P_m(X)=X^3+mX^2-(m+3)X+1\in \ZZ[X]$, $m\in\ZZ$ and $E_m:Y^2=P_m(X)$. The irreducible polynomial $P_m(X)$ defines a cubic fields $K_m$  that has been studied by several authors. This family has been called the {\it simplest cubic field}  (\cite{Shanks}). Its discriminant satisfies $\Delta(P_m)=(m^2+3 m +9)^2$, hence $K_m=\QQ(E[2])$ is cyclic and therefore $ \rho_{E_m,2}({\rm G}_\QQ)\cong {\rm Gal}(K_m/\QQ)= C_3$. Therefore $E_m(\QQ)_{\rm tors}$ is trivial or $C_3$. Moreover, it has been proved \cite{duquesne} that if $m^2+3 m +9$ is square-free, then $E_m(\QQ)_{\rm tors}$ is trivial.
}\end{remark}

\begin{remark}{\rm
We have checked on the extended Cremona's tables \cite{cremonaweb} of elliptic curves with conductor less than $130.000$. Among them, $452.724$ curves have torsion subgroup either trivial or $C_3$. At the table below appears the specific proportions of curves according to their torsion group and the squareness of their discriminant:\\
\begin{center}
\begin{tabular}{|c|c|c|}
\hline
$\TE$ & $\sqrt{\Delta(E)}\in\QQ$ & $\sqrt{\Delta(E)}\not\in\QQ$\\
\hline
$\{ \mathcal O \}$ & $0.00383$ & $0.9553$ \\
\hline
 $C_3$ & $0.00008$ & $0.0408$ \\
\hline
\end{tabular}
\end{center}
}\end{remark}

\section{Main theorems.}\label{s_thm}

To end up we will summarize our results in the following theorems.

\begin{theorem}\label{square}
Let $E$ be an elliptic curve defined over $\QQ$. Then 
\begin{enumerate}
\item  If $\TE$ is non--cyclic then $\sqrt{\Delta(E)}\in\QQ$.
\item  If $\TE\cong C_n$ for $n=2,4,5\dots 10,12$ then $\sqrt{\Delta(E)}\not\in\QQ$.
\item  $\TE\cong C_3$ and $\sqrt{\Delta(E)}\in\QQ$ if and only if there exist $a,b,c,d\in\ZZ$ such that $E$ is $\QQ$-isomorphic to either $E^{(1)}(a,b,c,d)$ or $E^{(2)}(a,b,c,d)$ and the corresponding polynomial $P^{(i)}(a,b,c,d)$) is irreducible. 
\item $\TE$ is trivial and $\sqrt{\Delta(E)}\in\QQ$ if and only if there exist $a,b,c,d\in\ZZ$ such that $E$ is $\QQ$-isomorphic to $E_{alt}(a,b,c,d)$ and $a,b,c,d\notin \mathcal{S}_2,\mathcal{S}_3$ where
\begin{eqnarray*}
\mathcal{S}_2 & = & \left\{  (a,b,c,d)\in\ZZ^4\,\Big | \,\Psi_2(a,b,c,d)(X)=\prod_{i=1}^3{(X-\alpha_i)}\,\,\mbox{such that $\alpha_1,\alpha_2,\alpha_3\in\ZZ$}\right\}\\
\mathcal{S}_3 & = & \left\{  (a,b,c,d)\in \ZZ^4\,\Big | \,\exists (\alpha,\beta)\in\QQ^2,\mbox{such that  
$
\left\{
\begin{array}{l}
\Psi_3(a,b,c,d)(\alpha)=0\\
\Psi_2(a,b,c,d)(\alpha)=\beta^2
\end{array}
\right.
$}\right\}
\end{eqnarray*}
and $\Psi_n(a,b,c,d)(X)$ denotes the $n$-division polynomial attached to $E_{alt}(a,b,c,d)$.
\end{enumerate}
\end{theorem}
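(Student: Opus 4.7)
The plan is to assemble the four parts directly from results already established in the previous sections, with essentially no new computation required.

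For part (1), I would invoke Mazur's classification: a non-cyclic $\TE$ must contain $C_2\times C_2$, forcing $\#E(\QQ)[2]=4$, so the defining cubic $F(X)$ splits completely over $\QQ$. The identity $\Delta(E)=2^4\prod_{i<j}(\alpha_i-\alpha_j)^2$ then exhibits $\sqrt{\Delta(E)}\in\QQ$, as already observed in Section \ref{s2}. For part (2), I would split the list $n\in\{2,4,5,\dots,10,12\}$ into two groups. The even values $n\in\{2,4,6,8,10,12\}$ all give $\#E(\QQ)[2]=2$ (a cyclic group of even order has a unique subgroup of order $2$), and Section \ref{s2} supplies the explicit factorization $\Delta(E)=2^4(a^2-4b)(2a^2+b)^2$ with $a^2-4b$ a non-square, so $\Delta(E)$ is not a square. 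The odd values $n\in\{5,7,9\}$ are precisely the content of Propositions \ref{5}, \ref{7} and \ref{9}.

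For part (3), the forward implication combines Proposition \ref{3_GT} (which parametrizes every elliptic curve with a rational $3$-torsion point and square discriminant) with the observation that $\TE\cong C_3$ forbids rational $2$-torsion, so the corresponding cubic $P^{(i)}(a,b,c,d)(X)$ has no rational root and is therefore irreducible. For the converse, the explicit discriminant formulas for $E^{(i)}$ in Section \ref{s3} show $\sqrt{\Delta(E^{(i)})}\in\QQ$; a rational $3$-torsion point is built into the construction, so $C_3\subseteq\TE$; irreducibility of $P^{(i)}$ excludes $C_2\subseteq\TE$, ruling out $C_6$ and $C_{12}$; and Propositions \ref{5}, \ref{7} and \ref{9} rule out $C_5$, $C_7$ and $C_9$. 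By Mazur's list this forces $\TE\cong C_3$.

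For part (4), the forward direction uses the parametrization proposition of Section \ref{s0}, which gives that $E$ is $\QQ$-isomorphic to $E_{alt}(a,b,c,d)$ as soon as $\sqrt{\Delta(E)}\in\QQ$, together with the interpretation of $\mathcal{S}_2$ and $\mathcal{S}_3$ as the parameter loci on which $E_{alt}$ picks up full rational $2$-torsion (equivalently, non-cyclic torsion) or a rational point of order $3$. Triviality of $\TE$ thus forces $(a,b,c,d)\notin\mathcal{S}_2\cup\mathcal{S}_3$. For the converse, the same Section \ref{s0} proposition already restricts $\TE$ of any $E_{alt}$ to be trivial, non-cyclic or $C_3$; excluding $\mathcal{S}_2$ removes the non-cyclic possibility and excluding $\mathcal{S}_3$ removes the $C_3$ possibility. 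The only point requiring care, rather than a real obstacle, is confirming that $\mathcal{S}_n$ truly describes the existence of a rational $n$-torsion point via $\Psi_n(\alpha)=0$ together with $\Psi_2(\alpha)$ being a rational square, with integrality of roots for $n=2$ following from the rational root theorem applied to the monic integral cubic $\Psi_2$.
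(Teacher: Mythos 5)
Your proposal is correct and follows essentially the same route as the paper, which presents this theorem as a summary whose proof "will have been established" by the preceding sections: part (1) and the even cases of part (2) from Section \ref{s2}, the odd cases from Propositions \ref{5}, \ref{7} and \ref{9}, part (3) from Proposition \ref{3_GT} plus the explicit discriminants of $E^{(i)}$, and part (4) from the Section \ref{s0} proposition with $\mathcal{S}_2$, $\mathcal{S}_3$ detecting the non-cyclic and $C_3$ possibilities. Your assembly of these ingredients, including the use of irreducibility of $P^{(i)}$ to exclude rational $2$-torsion and of Mazur's list to pin down $C_3$, matches the paper's intended argument.
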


\begin{theorem}
Let $E$ be an elliptic curve defined over $\QQ$. Then
\begin{enumerate}
\item $\TE$ is non--cyclic if and only if $\rho_{E,2}({\rm G}_\QQ)=\{{\rm id}\}$.
\item $\TE \cong  C_{2n}$ if and only if $\rho_{E,2}({\rm G}_\QQ)\cong C_2$.
\item If $\TE \cong  C_n$ for $n=5,7,9$, then  $\rho_{E,2}({\rm G}_\QQ)={\rm GL_2(\mathbb{F}_2)}$. 
\item If $\TE \cong  C_3$ then $\rho_{E,2}({\rm G}_\QQ)\cong C_3$ if and only if there exist $a,b,c,d\in\ZZ$ such that $E$ is $\QQ$-isomorphic to either $E^{(1)}(a,b,c,d)$ or $E^{(2)}(a,b,c,d)$. Otherwise, $\rho_{E,2}({\rm G}_\QQ)={\rm GL_2(\mathbb{F}_2)}$.
\item If $\TE$ is trivial then $\rho_{E,2}({\rm G}_\QQ)\cong C_3$ if and only if there exist $a,b,c,d\in\ZZ$ such that $E$ is $\QQ$-isomorphic to  $E_{alt}(a,b,c,d)$. Otherwise, $\rho_{E,2}({\rm G}_\QQ)={\rm GL_2(\mathbb{F}_2)}$.
\end{enumerate}
\end{theorem}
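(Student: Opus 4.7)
The plan is to derive Theorem 2 as a direct consequence of Theorem \ref{square} together with Serre's formula (\ref{serre}). Essentially, (\ref{serre}) determines $\rho_{E,2}({\rm G}_\QQ)$ from the pair (whether $\sqrt{\Delta(E)}\in\QQ$, whether $\#E(\QQ)[2]>1$), and Theorem \ref{square} already controls the first coordinate in terms of $\TE$. The only auxiliary ingredient needed is a group-theoretic lemma extracting $\#E(\QQ)[2]$ from the abstract isomorphism type of $\TE$: by Mazur's list, $\#E(\QQ)[2]=4$ if $\TE$ is non-cyclic, $\#E(\QQ)[2]=2$ if $\TE$ is cyclic of even order, and $\#E(\QQ)[2]=1$ if $\TE$ is trivial or cyclic of odd order.

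For part (1), non-cyclic $\TE$ forces $\#E(\QQ)[2]=4$ and, by Theorem \ref{square}(1), $\sqrt{\Delta(E)}\in\QQ$, so (\ref{serre}) gives $\rho_{E,2}({\rm G}_\QQ)=\{{\rm id}\}$. Conversely, $\rho_{E,2}({\rm G}_\QQ)=\{{\rm id}\}$ forces $\#E(\QQ)[2]>1$; if $\TE$ were cyclic of even order, Theorem \ref{square}(2) would give $\sqrt{\Delta(E)}\notin\QQ$, contradicting (\ref{serre}) in that case, so $\TE$ is non-cyclic. Part (2) is the symmetric argument: $\TE\cong C_{2n}$ gives $\#E(\QQ)[2]=2$ and, by Theorem \ref{square}(2), non-square discriminant, yielding $\rho_{E,2}({\rm G}_\QQ)\cong C_2$; and conversely, $\rho_{E,2}({\rm G}_\QQ)\cong C_2$ prevents both triviality (since $\#E(\QQ)[2]>1$) and non-cyclic torsion (by Theorem \ref{square}(1) applied contrapositively). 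For part (3), $\TE\cong C_n$ with $n\in\{5,7,9\}$ gives $\#E(\QQ)[2]=1$ and, by Theorem \ref{square}(2), $\sqrt{\Delta(E)}\notin\QQ$, so (\ref{serre}) produces $\rho_{E,2}({\rm G}_\QQ)\cong S_3={\rm GL}_2(\mathbb{F}_2)$.

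Parts (4) and (5) both fall under the hypothesis $\#E(\QQ)[2]=1$, so (\ref{serre}) reduces to the clean dichotomy: $\rho_{E,2}({\rm G}_\QQ)\cong C_3$ exactly when $\sqrt{\Delta(E)}\in\QQ$, and $\rho_{E,2}({\rm G}_\QQ)\cong{\rm GL}_2(\mathbb{F}_2)$ otherwise. Plugging in Theorem \ref{square}(3) (under $\TE\cong C_3$) or Theorem \ref{square}(4) (under $\TE$ trivial) translates the condition $\sqrt{\Delta(E)}\in\QQ$ into the existence of a $\QQ$-isomorphism with one of the parametrized families $E^{(i)}(a,b,c,d)$ or $E_{alt}(a,b,c,d)$. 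The auxiliary clauses appearing in Theorem \ref{square} (irreducibility of $P^{(i)}(a,b,c,d)$; $(a,b,c,d)\notin\mathcal{S}_2\cup\mathcal{S}_3$) are redundant once the torsion structure is fixed: under $\TE\cong C_3$ the defining cubic of $E$ has no rational root and is thus irreducible, while under $\TE$ trivial, membership in $\mathcal{S}_2$ or $\mathcal{S}_3$ would manufacture a rational 2- or 3-torsion point.

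There is no serious obstacle left at this stage: the genuine difficulties (Mordell-Weil of $\mathcal{D}_5$, Chabauty on $\mathcal{D}_7$, the quotient-by-$u$ trick on $\mathcal{D}_9$, and the parametrization of $x^2+3y^2=4z^3$) were absorbed in the proof of Theorem \ref{square}. The only mild care required here is in writing the converses cleanly, ensuring that the torsion hypothesis in each case actually excludes the two neighbouring rows of (\ref{serre}) so that the equivalences are genuine biconditionals rather than implications.
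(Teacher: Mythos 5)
Your proposal is correct and follows essentially the same route as the paper, which states this theorem as a summary whose proof is exactly the assembly you carry out: combine Serre's criterion (\ref{serre}) with the squareness results of Section 2 and Propositions \ref{3_GT}--\ref{9} (equivalently Theorem \ref{square}), using that $\#E(\QQ)[2]$ is $4$, $2$ or $1$ according as $\TE$ is non-cyclic, cyclic of even order, or of odd order. Your observations that the converses need the neighbouring cases excluded, and that the auxiliary clauses of Theorem \ref{square}(3)--(4) become redundant once the torsion type is fixed, are both accurate.
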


The next table summarizes part of the main results of this paper:
\VS
\begin{center}
\begin{tabular}{|ccc|ccc|ccc|}
\hline
& $\TE$ & &   & $\sqrt{\Delta(E)}\in \QQ$? &  & & $\rho_{E,2}({\rm G}_\QQ)$ &\\
\hline
&  $\{ \mathcal O \}$ & & & Yes / No & & & $C_3$ / $S_3$ &\\
\hline
&  $C_2$ & & &  No & & & $C_2$ &\\
\hline
&  $C_3$ & & & Yes / No & & & $C_3$ / $S_3$ &\\
\hline
&  $C_4$ & & &  No & & & $C_2$ &\\
\hline
&  $C_5$ & & &  No & & & $S_3$ &\\
\hline
&  $C_6$ & & &  No & & & $C_2$ &\\
\hline
&  $C_7$ & & &  No & & & $S_3$ &\\
\hline
&  $C_8$ & & &  No & & & $C_2$ &\\
\hline
&  $C_9$ & & &  No & & & $S_3$ &\\
\hline
&  $C_{10}$ & & &  No & & & $C_2$ &\\
\hline
&  $C_{12}$ & & &  No & & & $C_2$ &\\
\hline
&  $C_2\times C_2$ & & &  Yes & & & $\{{\rm id}\}$ &\\
\hline
&  $C_2\times C_4$ & & &  Yes & & & $\{{\rm id}\}$ &\\
\hline
&  $C_2\times C_6$ & & &  Yes & & & $\{{\rm id}\}$ &\\
\hline
&  $C_2\times C_8$ & & &  Yes & & & $\{{\rm id}\}$ &\\
\hline
\end{tabular}
\end{center}

\section{Appendix: The generalized Fermat equation $x^2+3y^2=4z^3$}\label{parametrizacion}

The generalized Fermat equation $A x^p+B y^q = C z^r$, where $A,B,C \in \ZZ^*$ and $p,q,r\in \ZZ_{>0}$, have been studied by several people for the last decades. Starting with the huge work on Diophantine equations due to L.J. Mordell  \cite{Mordell}. After that the main results are due to H. Darmon and A. Granville \cite{DG}, who proved that the generalized Fermat equation has infinite primitive solutions (i.e. $gcd(x,y,z)=1$) in the case $1/p+1/q+1/r>1$. Then F. Beukers \cite{Beukers} gave parametrizations for the solutions of this equation in the above case. H. Cohen in his huge new books about number theory calls this equation Super-Fermat equation and he also provides solutions for several cases.  In particular, H. Cohen \cite[Proposition 14.2.1(ii)]{Cohen} displays primitive solutions to the equation we are interested in. However, we would like to have all integer solutions, not just the primitive ones. The following parametrization was pointed out to us by F. Beukers:

\begin{lemma}\label{ecu4}
The  integer solutions of the equation $x^2+3y^2=4z^3$ are parametrized by the following family of four variables:
$$
\mathcal{F}\,:\,
\left\{
\begin{array}{rcl}
x & = & (c^2 + c d + d^2) (3 a^2 b (c - d) + a^3 (2 c + d) - b^3 (2 c + d) -  3 a b^2 (c + 2 d))\\
y & = & (c^2 + c d + d^2) (3 a b^2 c + a^3 d - b^3 d + 3 a^2 b (c + d))\\
z & = & (c^2 + c d + d^2) (a^2 + a b + b^2) 
\end{array}
\right.
$$
\end{lemma}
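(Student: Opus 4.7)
The plan is to reinterpret the equation in the Eisenstein integers $\mathbb{Z}[\omega]$, where $\omega=e^{2\pi i/3}$. Using $\sqrt{-3}=1+2\omega$, and observing that reducing $x^2+3y^2=4z^3$ modulo $4$ forces $x\equiv y\pmod 2$, I note that every integer solution corresponds bijectively to an element $\alpha=(x+y)/2+y\omega\in\mathbb{Z}[\omega]$ of norm $N(\alpha)=(x^2+3y^2)/4=z^3$. So the problem reduces to parametrizing those $\alpha\in\mathbb{Z}[\omega]$ whose norm is a cube in $\mathbb{Z}$.

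The next step is to recognise the family $\mathcal{F}$ as the expansion of a single product in $\mathbb{Z}[\omega]$. Setting $\mu=a-b\omega$ and $\nu=c-d\omega$, I would check by direct multiplication that $\bar\mu^{3}\,\nu\bar\nu^{2}=N(\nu)\cdot\bar\mu^{3}\bar\nu$, written out in the basis $\{1,\omega\}$ and translated back via $\sqrt{-3}=1+2\omega$, yields exactly the polynomials for $x$ and $y$ in $\mathcal{F}$, while $N(\bar\mu^{3}\nu\bar\nu^{2})=N(\mu)^{3}N(\nu)^{3}=(a^{2}+ab+b^{2})^{3}(c^{2}+cd+d^{2})^{3}=z^{3}$. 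This gives the forward direction immediately: every $(a,b,c,d)\in\mathbb{Z}^{4}$ produces an integer solution.

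For the converse, I would exploit that $\mathbb{Z}[\omega]$ is a Euclidean domain and examine the prime factorisation of $\alpha$ under $N(\alpha)=z^{3}$. At a self-conjugate prime $\pi$ (the ramified prime $1-\omega$ or an inert rational prime $p\equiv 2\pmod 3$), conjugation fixes $\pi$ up to units, so $v_\pi(\alpha)=v_\pi(\bar\alpha)$ and $2v_\pi(\alpha)=v_\pi(z^{3})$ forces $3\mid v_\pi(\alpha)$. At a split pair $\{\pi,\bar\pi\}$ over $p\equiv 1\pmod 3$ one only has $v_\pi(\alpha)+v_{\bar\pi}(\alpha)=3v_p(z)$, so the individual exponents are any non-negative integers summing to a multiple of $3$. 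In either case, a prime-by-prime choice of $v_\pi(\mu),v_{\bar\pi}(\mu),v_\pi(\nu),v_{\bar\pi}(\nu)\in\mathbb{Z}_{\geq 0}$ makes the valuations of $\bar\mu^{3}\nu\bar\nu^{2}$ agree with those of $\alpha$: the symmetric (cube-exponent) part is absorbed into $\bar\mu^{3}$, and the asymmetric residual split-prime contribution is realised by $\nu\bar\nu^{2}$, which attains any $(f,g)$ with $f+g\equiv 0\pmod 3$ via $(v_\pi(\nu)+2v_{\bar\pi}(\nu),\,v_{\bar\pi}(\nu)+2v_\pi(\nu))$.

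The main obstacle, which I expect will need the most care, is the bookkeeping of units: the construction only determines $\alpha=u\,\bar\mu^{3}\nu\bar\nu^{2}$ up to a factor $u\in\mathbb{Z}[\omega]^{\times}=\{\pm 1,\pm\omega,\pm\omega^{2}\}$, and $u$ must be absorbed into $(a,b,c,d)$. The substitution $(a,b)\mapsto(-a,-b)$ negates $\mu$ and hence $\bar\mu^{3}$, while $(a,b)\mapsto(b,-a-b)$ realises $\mu\mapsto\omega\mu$; analogous moves act on $(c,d)$. A finite case-by-case check over the six-element unit group then shows every Eisenstein integer of cube norm is hit by some $(a,b,c,d)\in\mathbb{Z}^{4}$, completing the proof.
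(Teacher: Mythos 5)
Your proposal is correct and follows essentially the same route as the paper: both factor the equation over the Eisenstein integers $\mathbb{Z}[\rho]=\mathbb{Z}[\omega]$ and use unique factorization to write the relevant element of cube norm as a unit times $\bar\mu^{3}\nu\bar\nu^{2}$, which (since $\nu\bar\nu^{2}=N(\nu)\bar\nu$ and $\bar\mu=a+b\rho$, $\bar\nu=c+d\rho$) is exactly the paper's identity $x+\sqrt{-3}\,y=2(a+b\rho)^3(c+d\rho)(c^2+cd+d^2)$. Your normalization by $2$ and your explicit valuation and unit bookkeeping are in fact somewhat more careful than the paper's argument, which strips common split primes from $\gcd(x+\sqrt{-3}y,\,x-\sqrt{-3}y)$ and leaves the unit ambiguity implicit (it is absorbed into the factor $c+d\rho$), but the underlying mechanism is the same.
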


\begin{proof}
Let $(x,y,z)$ be an integer solution of the equation $x^2+3y^2=4z^3$. Over $\QQ(\sqrt{-3})$we have $(x+\sqrt{-3}y) (x-\sqrt{-3} y)=4z^3$. We are going to work over the ring of algebraic integers of $\QQ(\sqrt{-3})$, noted $\mathcal{O}=\ZZ[\rho]$ where $\rho=(1+\sqrt{-3})/2$. This ring is a P.I.D., where it is easy to check that $\gcd(x+\sqrt{-3}y,x-\sqrt{-3} y)=r\in\ZZ$, for all $x,y \in \ZZ$. Therefore $x+\sqrt{-3}y=r\cdot \mu$,  $x-\sqrt{-3}y=r\cdot \bar\mu$ with $\mu\in \mathcal{O}$ and $\gcd(\mu,\bar\mu)=1$. 

Let $p\neq 3$ be a non inert prime dividing $r$. Thus $p=\alpha\bar\alpha$ for some $\alpha\in\mathcal{O}$. Suppose that $\alpha$ divides $\mu$, then we have $r=r'p$ and $\mu=\alpha\mu'$ for some $r'\in \ZZ$ and $\mu'\in\mathcal{O}$. In other words, $x+\sqrt{-3}y=(r'\alpha\bar\alpha)(\alpha\mu')$. Therefore
$$
4 z^3=r^2\mu\bar\mu=(r')^2 p^3 \mu' \bar\mu',
$$
that is, $p$ divides $z$. Then we can remove all the primes as above obtaining $4w^3=s^2\gamma\bar \gamma$ such that $s,\gamma,\bar\gamma$ are pairwise coprimes. This yields $s=2t^3$ and $\gamma=\beta^3$ for some $t\in \ZZ$ and $\beta\in\mathcal{O}$. Now collecting all the factors back we obtain 
$$
x+\sqrt{-3}y = 2 (a+b\rho)^3 (c+d \rho)(c^2 + c d + d^2)
$$
for some $a,b,c,d\in \ZZ$. Its easy to check that $z= \mathcal{N}_{\mathcal{O}}((a+b \rho)(c+d \rho))$, where $\mathcal{N}_{\mathcal{O}}$ denotes the norm on ${\mathcal{O}}$. In order to attach the parametrization, we only have to expand the above expression and compute the coefficient of $\sqrt{-3}$ (corresponding to $y$) and the part in $\ZZ$ (corresponding to $x$).
\end{proof}

\end{document}